\documentclass[12pt]{amsart}

\usepackage[left=3cm,top=3.0cm,right=3cm,bottom=2.6cm]{geometry}

\usepackage[ansinew]{inputenc}
\usepackage{amscd,amsfonts,amsmath,amssymb,amsthm,enumerate,epsfig,float,graphics,graphicx,pst-all,yhmath}

\newcommand{\inte}{\operatorname*{int}}
\newcommand{\aff}{\operatorname*{aff}}
\newcommand{\bd}{\operatorname*{bd}}

\newcommand{\Ed}{\mathbb{E}^{d}}
\newcommand{\Sd}{\mathbb{S}^{d-1}}

\newtheorem{lemma}{Lemma}

\newtheorem{theorem}{Theorem}

\newtheorem{question}{Question}

\title [A characterization of the ellipsoid]{A characterization of the ellipsoid in terms of pairs of sections associated by a harmonic homology}
\author{Efr\'en Morales-Amaya}
\address{Facultad de Matem\'aticas-Acapulco,
Universidad Aut\'onoma de Guerrero, M\'exico}
\email{emoralesamaya@gmail.com}
\thanks{I would like to thank to Jes\'us Jer\'onimo-Castro for the interesting conversations about this work.
}   
\begin{document}

\maketitle
          \begin{abstract}  Let $K$ be  a convex body in an affine chart of the $n$ dimensional real Projective 
         space $\mathbb{RP}^n$, $n \geq 3$, let $H$ be a hyperplane 
          which is not a support hyperplane of $K$ and let $p_1,p_2 \in \mathbb{RP}^n \setminus H$ be two 
          distinct interior points of $K$. In this work we prove that if for
          for every $(n-2)$-plane $l \subset H$, there exists a harmonic 
          homology, with plane $G$ and center $\tau$, such that  $l\subset G$, $\tau \in H$ and  
          which maps the hypersection of $K$ defined by $\aff \{p_1, l\}$ 
          onto the hypersection of $K$ defined by $\aff \{p_2, l\}$, then 
           $K$ is an ellipsoid.
                \end{abstract}
          \section{Introduction}     
         A \textit{homology} is a collineation of the real projective plane $\mathbb{RP}^2$ on 
         itself, other than the identity, which leaves fixed every point of a line $G$ 
         and every line through a point $\tau$ not on $G$ (\cite{BusemannKelly}). 
         The line $G$ and the point $\tau$ are called respectively the 
       \textit{axis} and the \textit{center} of the homology. A homology is called 
       \textit{harmonic} it it has the property that a general pair of 
       correspondent points, $z$ and $z'$, are separated harmonically by the
       center $\tau$ and the point in which $L(z, z')$ intersects the axis $G$. In 
       natural way, this notion can be extended to any dimension.

         Let $K $ be a convex body in an affine chart of the $d$ dimensional real Projective 
         space $\mathbb{RP}^d$, let $H \subset \mathbb{RP}^d$ be a hyperplane which is not a 
         support hyperplane of $K$ and let $p_1,p_2 \in \mathbb{RP}^n \setminus H$ be two distinct interior 
         points of $K$. For every $(d-2)$-plane 
         $l\subset H$, we denote by $\Pi_i$ the hyperplane defined by $l$ and $p_i$ and let 
         $K_l ^i:=\Pi_i\cap K$, $i=1,2$.  
         
         Our main result is the following
         \begin{theorem}\label{Claudia}
         Let $K $ be a convex body in an affine chart of $\mathbb{RP}^d$, let 
         $H \subset \mathbb{RP}^d$ be a hyperplane and let 
         $p_1,p_2\in \inte K$, $p_i\notin H$, $i=1,2$. Suppose that for every $(d-2)$-plane
         $l\subset H$ there exists an harmonic homology 
         $\Phi_G^{\tau} :\mathbb{P}^d \rightarrow \mathbb{P}^d$ with hyperplane $G$ and 
         center $\tau$ such that $l\subset G$, $\tau \in H$ and 
         \[
         \Phi_G^{\tau}(K_l^1)=K_l^2.
         \]
         Then $K$ is an ellipsoid.
         \end{theorem}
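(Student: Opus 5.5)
The plan is to reduce the statement to a known characterization of the ellipsoid through its sections or shadow boundaries, using the homologies to manufacture projective symmetries. First we set up the geometry. Let $L$ be the line through $p_1$ and $p_2$, and let $q=L\cap H$; since $H$ is not a support hyperplane and the $p_i$ are interior points off $H$, both $q$ and $H\cap K$ are well understood. For each $(d-2)$-plane $l\subset H$, the harmonic homology $\Phi_l:=\Phi_G^{\tau}$ fixes $l$ pointwise because $l\subset G$, and it maps $\Pi_1$ onto $\Pi_2$. So it sends the section $K_l^1$ onto $K_l^2$ and fixes their common $(d-2)$-dimensional section $K\cap l$.

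The first real step is to show that $\Phi_l$ maps $p_1$ to $p_2$, or at least that its center $\tau$ lies on a distinguished line, so that $\tau=q$ whenever $q\notin l$. I would do this by comparing $\Phi_l$ with the central projection from $q$ restricted to $\Pi_1\to\Pi_2$. Both maps fix $l$ pointwise, and $\Phi_l$ restricted to $\Pi_1$ is the perspectivity from $\tau$, since lines through $\tau$ are invariant. A perspectivity $\Pi_1\to\Pi_2$ that fixes $\Pi_1\cap\Pi_2=l$ is determined by its center, so $\Phi_l|_{\Pi_1}$ is the projection from $\tau$, with $\tau\in H$. This gives a one-parameter freedom: the position of $\tau$ in $H$ is not determined by $l$ in general.

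The key step is to extract self-projective symmetry of sections through $l$. Let $\Pi$ be a hyperplane through $l$ that passes between $\Pi_1$ and $\Pi_2$ in the pencil. By the harmonic property, $\Phi_l$ fixes $G$ pointwise, and $G$ belongs to the pencil through $l$. So the section $K\cap G$ is mapped to itself, and $\Phi_l$ maps $K$ onto a convex set $\Phi_l(K)$ that agrees with $K$ on $\Pi_2$. The intended argument is to show that the cone over $K_l^1$ from $\tau$ is tangent to, or supports, $K$ along a shadow boundary in $G$. That would make $K\cap G$ a shadow boundary of $K$ from the point $\tau\in H$ that lies in a hyperplane.

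Varying $l$ over all $(d-2)$-planes in $H$ then yields a $(d-1)$-parameter family of points $\tau(l)\in H$ whose shadow boundaries are planar. By continuity and a degree argument, $\tau(l)$ ranges over an open subset of $H$, or over all of it. Hence $K$ has planar shadow boundaries for all centers in an open set of $H$, which lies outside $K$ for a suitable chart. A projective version of Blaschke's theorem (planar shadow boundaries from all points of an open set of exterior points) then implies that $K$ is an ellipsoid; this needs $d\ge3$, which explains the hypothesis $n\ge3$. An alternative route uses Brunn's or Olovjanischnikoff's characterization via planar midpoint loci of parallel chords, after sending $H$ to infinity by a projective map.

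The main obstacle is the tangency claim: showing that the harmonic condition forces $K\cap G$ to be a shadow boundary, rather than just a section mapped consistently between two specific hyperplanes. I would first attack this in dimension $2$ inside each $2$-plane through $\tau$ and a point of $l$, where the problem reduces to a planar statement about a conic-free curve and two chords related by a harmonic homology. I would then use the freedom of choosing $l$ through a fixed $(d-3)$-plane to vary $\Pi_1,\Pi_2$ and force the needed infinitesimal condition.
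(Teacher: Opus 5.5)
Your key step is not proved, and the mechanism you propose for it already fails for the ball. Take $K$ the unit ball, $H=H_\infty$, $p_1=a$, $p_2=-a$ with $0<|a|<1$. Let $l$ be the directions orthogonal to a unit vector $v$ with $t:=\langle a,v\rangle\neq 0$. Then $K_l^1,K_l^2$ are $(d-1)$-balls of radius $\sqrt{1-t^2}$ centred at $\pm tv$. The only admissible homology is the orthogonal reflection in $v^{\perp}$, so $G=v^{\perp}$ and $\tau$ is the point at infinity of $v$. Hence $\Phi_l(p_1)=a-2tv\neq p_2$, and $\tau\neq q$ unless $v\parallel a$, so your first step aims at a false statement. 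Moreover, the cone from $\tau$ over $K_l^1$ is a cylinder of radius $\sqrt{1-t^2}<1$. It crosses $\bd K$ transversally along $\bd K_l^1\cup\bd K_l^2$ and is not tangent to $K$ along $K\cap G$. That $K\cap G$ is still the shadow boundary from $\tau$ here is only because $G$ is the polar of $\tau$ with respect to the ball. That is precisely the quadric structure you are trying to prove, so your main obstacle is essentially the whole theorem. The last step is unsupported as well. Nothing shows that $l\mapsto\tau(l)$ is single-valued or continuous, so a degree argument has nothing to act on. Planar shadow boundaries from an open set of centres in $H$ do not characterize ellipsoids: with $H=H_\infty$ and centres in a small cap of directions around $v_0$, truncating an ellipsoid by a hyperplane close to its supporting hyperplane with outer normal $v_0$ leaves all those shadow boundaries unchanged. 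Finally, if $H$ meets $\inte K$, the points of $H\cap K$ have no shadow boundary at all.

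What you are missing is that $G$ is forced. $\Phi_G^\tau$ fixes $G$ pointwise and leaves $H=\aff\{l,\tau\}$ invariant, so it induces on the pencil of hyperplanes through $l$ an involution with fixed elements $G$ and $H$. Hence $[\Pi_1,\Pi_2;G,H]=-1$, and cutting with $L(p_1,p_2)$ shows that, whenever $h\notin l$, $G$ contains the point $g$ with $[p_1,p_2;g,h]=-1$, where $h=L(p_1,p_2)\cap H$. The paper builds on this and never uses shadow boundaries. It asserts that $g$ is a pole of $K$ with polar $H$, so that $\Phi_H^g$ also maps $K_l^1$ onto $K_l^2$. It then combines the two homologies; the paper uses the complete quadrangle theorem, but in effect $g\in G$ and $\tau\in H$ make them commute, and $\Phi_G^\tau\circ\Phi_H^g$ restricts on $\Pi_1$ to the harmonic homology with centre $L(g,\tau)\cap\Pi_1$ and axis $l$. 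This gives a pole, with polar in $H$, for every hyperplane section through $p_1$ transverse to $L(p_1,p_2)$. So $p_1$ is a false pole and the False Pole Theorem finishes. Be aware, though, that the pole step carries the real difficulty: for $H=H_\infty$ it is exactly the Larman--Tamvakis step the paper invokes in its affine discussion. The paper's argument for it is not adequate. Its closing equality $[p_1,p_2;g,h]=[r_1,r_2;g,k]$ presupposes $r_2\in\Pi_2$, and the argument uses nothing beyond $\Phi_G^\tau(\Pi_1)=\Pi_2$. That condition holds for every convex body once you take axis $\aff\{l,g\}$ and any centre in $H\setminus l$.
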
  
         
        The strategy for proving the Theorem \ref{Claudia} consists of three steps 
        (see subsection \ref{projective} for the corresponding definitions):
         \begin{enumerate}
         \item To prove that $K$ has a pole $g$ with polar $H$ (Lemma \ref{pole}),

          \item  $p_i$ is a false pole of $K$ with respect to the plane $H$ (Lemma \ref{falsepole}) and

         \item to use the False Pole Theorem (see Sec 2.2).
         \end{enumerate}
  
         \section{Preliminaries and basic notions}

       Let $\mathbb{E}^{d}$ be the Euclidean space of dimension $d$ endowed with the usual scalar 
       product $\langle \cdot, \cdot\rangle : \mathbb{E}^{d} \times \mathbb{E}^{d} \rightarrow \mathbb{R}$. 
       Let $\mathbb{S}^{d-1}=\{x\in \mathbb{E}^{d}: \|x\| = 1\}$ be the unit sphere in $\mathbb{E}^{d}$, for $v \in \Sd$ we denote by $v^{\perp}$ 
       the hyperplane through the origin perpendicular to $v$. Let $x, y \in \mathbb{R}^n$, we denote by $L(x, y)$ the line through $x$ and $y$, 
       and by $[x, y]$ the line segment connecting them. If $W\subset \mathbb{E}^{d}$, we denote by $\aff W$ the affine hull of $W$.

        A {\it convex body} $K\subset {\mathbb E^d}$, $d\ge 2$,   is a convex compact set with non-empty interior. 
        A \textit{convex hypersurface} is the boundary of a convex body $K$ in $\mathbb{E}^{d}$ and it will be 
        denoted by $\bd K$. We will denote by $\inte K$ the set $K\backslash \bd K$.
        A {\it chord} of a convex body $K$ is any line segment $[x, y]$ in $K$ 
	such that $x,y\in \bd K$. An excellent book 
        where you can consult the basic concepts and results of convexity is \cite{Constantwidth}.
                 
        Let $W\subset \mathbb{E}^d $ be a compact convex set. Given a point 
         $x \in \mathbb{E}^d \backslash \aff \{W\}$, we denote by $S_x(W)$ the \textit{cone generated by $W$ 
         with apex $x$}, that is, 
         $S_x(W) := \{x + \lambda(y - x) : y \in W, \lambda \geq  0\}$ 
         and by $C_x(W)$ the boundary of $S_x(W)$.

         \subsection{Some affine notions and results} 
         A body $K\subset \mathbb{E}^d$ is origin symmetric if when ever $x \in K$, it follows that 
         $-x \in K$. A body K is \textit{centrally symmetric} or has a \textit{center} 
         if a translate of $K$ is origin symmetric, i.e., if there is a vector 
         $c\in \mathbb{E}^d$ such that $K-c$ is origin symmetric.
         
         Let $G \subset \mathbb{E}^d$ be a hyperplane and let $u$ be a unit vector 
         not parallel to $G$. A mapping $S_G^u : \mathbb{E}^d \rightarrow \mathbb{E}^d$ is 
         an \textit{affine reflection} with respect to $G$ if, for every point 
         $x \in \mathbb{E}^d$, the point $S_G^u(x)$ lies on the line parallel to $u$ through 
         $x$, at equal distance from $G$, and on the opposite side of $G$ from $x$; 
         $u$ and $G$ are called the \textit{direction} and the \textit{hyperplane of the affine
         symmetry}. A convex body $K \subset \mathbb{E}^d$ is said to be 
         \textit{affine symmetric with respect to $S_G^u$} if $S_G^u(K )= K$. In particular, if $u$ is orthogonal 
         to $G$, we say that the body $K$ is symmetric.  
            
         \textbf{False Center of Symmetry of a convex body.} We say that the point 
         $p\in \mathbb{E}^d$ is a \textit{false center} of $K$  if: (1) $p$ is not a center 
         of $K$ and (2) for every 2-plane $\Pi$ passing through $p$, the section 
         $\Pi \cap K$ is centrally symmetric (in the British language the word center 
         is \textit{centre} but in this work we will adopt the American word).
         
         In \cite{ro1} it is proved that a convex body $K\subset \mathbb{E}^d$, $d\geq 3$, with an 
         interior false center is centrally symmetric, and there it was conjectured that a convex body with a false 
       center is an ellipsoid. This problem was solved in \cite{ai-pe-ro} for the case when the false 
       center is in the interior of $K$ and in \cite{la} was presented the solution for the other cases. Both 
       proofs are quite long and complicated. In \cite{mm2} was given a short proof of the False Center 
       Theorem (FCT) and in \cite{mm3} a generalization of the FCT was proposed.

         \subsection{Some projective notions and results} \label{projective}
         
         Let $\mathbb{E}^d$ be the $d$-dimensional Euclidian space. We complete 
        $\mathbb{E}^d$ to the $d$-dimensional real projective space $\mathbb{RP}^d$ by 
        adding the hyperplane at the infinity $H_{\infty}$.
         Two points $P_1,P_z$ on a line, for which the division ratio with respect to $A,B$ is equal in 
         absolute value but different in sign, are called \textit{harmonic with respect to $A$ and $B$} 
         \cite{Struik}. $P_1$ the \textit{harmonic conjugate} of $P_2$, and conversely. We also say simply 
         that the four points are \textit{harmonic} and it will be denoted as 
         \[
         [A,B;P_1,P_2=-1].
         \] 
         The definition requires that for such points the relation
         \[
         P_1A:P_1B=-P_2A:P_2B
         \]
         holds.  
           
       The \textit{complete quadrangle} is formed by four points $A,B,C,D,$ connected by the six sides 
       $L(A,B),L(A,C), ...,L(C,D)$. These sides can be separated into three pairs of opposite sides, 
       which intersect in the three diagonal points $P,Q,R$: 
       \[
       L(A,B), L(C,D) \textrm{ }\textrm{ in }P;\textrm{ }\textrm{ }L(A,C),L(D,B)\textrm{ }\textrm{ in }Q;
       \textrm{ }\textrm{ }L(A,D),L(B,C) \textrm{ }\textrm{ in }R.
       \]  
        The following theorem will be used in the proof of the Theorem \ref{Claudia} (see pag. 46 of \cite{Struik}):
        
        \textbf{[Complete quadrangle Theorem]} 
          \textit{On every side of a complete quadrangle 
         the two vertices are harmonic with respect to the points in which this side is intersected by the sides 
         of the diagonal triangle.}
         
         We repeat the dual version of Pappus' theorem (see Sec. 2-7 and pag. 44 of \cite{Struik}):
                 
                 \textbf{[Dual version of Pappus' theorem]} 
         \textit{Two lines are intersected by four lines of a pencil in points with same cross ratio}.

         Let $K$ be a convex body in 
        $\mathbb{E}^d$, $d\geq  2$, i.e., a compact, convex set with non-empty interior. 
        The point $p\in \mathbb{RP}^d$ is said to be a \textit{pole} of $K$ (or a 
        \textit{projective center of symmetry} of K) with respect to the hyperplane 
        $H\subset \mathbb{RP}^d$ if for every line $L$ passing through $p$, we have 
       \[ 
       [A, B; p, q  ] =- 1,
       \]
        where $\{A,B\}:= L \cap K$, $q:=L \cap H$. 
        Under these circumstances, the hyperplane $H$ is called the polar (or a 
        projective hyperplane of symmetry) of the pole $p$. Some properties of this 
        notion can be found in \cite{mm}.
       
       Notice that if the point $p\in \mathbb{RP}^d$ is a pole of $K$ with polar $H$ 
       and we define the harmonic homology 
       $\Phi_H^{p}:\mathbb{RP}^d \rightarrow \mathbb{RP}^d $ 
       with center $p$ and hyperplane $H$, then $\Phi_H^{p}(K)=K$. 
        
       \textbf{The False Pole Theorem.} We say that the point $p\in \mathbb{RP}^d$ is 
       a \textit{false pole} of $K$ with respect to the hyperplane $H$ if: (1) $p$ is not a 
       pole of $K$ with respect to $H$ and (2) for every 2-plane $\Pi$ passing 
       through $p$, the section $\Pi \cap K$ has a pole whose corresponding polar is 
       $\Pi \cap H$.
       
       Notice that if $E\subset \mathbb{RP}^d$, $d\geq 3$, is an ellipsoid, 
       $p\in \mathbb{RP}^d$ is a pole of $E$ with polar $Q_p$ and $H$ is a hyperplane 
       such that $H\not=Q_p$, then $p$ is a false pole of 
       $K$ with respect to $H$. The next result was proved in \cite{LarmanMora}:
       
       \textbf{[False Pole Theorem].}
       \textit{Let $K\subset \mathbb{E}^d$, $d\geq 3$ be a convex body. If $K$ has an 
       interior false pole $p\in \mathbb{E}^d$ of $K$ with respect to the hyperplane 
       $H$, then $K$ is an ellipsoid.}
       
       We observe that if $p$ is a false pole of $K$ with respect to $H_{\infty}$, then 
       $p$ is a \textit{false centre of symmetry} of $K$, because $K$ is not centrally 
       symmetric at $p$ and every 2-section of $K$ passing through $p$ is centrally 
       symmetric.
       
       \section{Motivation}
      Let us consider the following question. 
       \begin{question}\label{motivation1}
       Let $K  \subset  \Ed$, $d\geq 3$, be a convex body and let $p_1, p_2\in \inte K$, $p_1\not=p_2$. 
       Su\-ppo\-se that, for every  $v \in \Sd$, there exists an affine reflection 
       $S_{v^\perp}^u: \Ed \rightarrow \Ed$, for some direction $u$ and hyperplane of 
       reflection $v^{\perp}$, such that 
       \begin{eqnarray}\label{danzonero}
       S_{v^\perp}^u \big{(} [p_1+v^{\perp}] \cap K \big{)}= [p_2+ v^{\perp}] \cap K.
        \end{eqnarray}
        What can we say about $K$?
        \end{question}
        
        One of the main motivations of this work is to study Question \ref{motivation1}, but not in the context of Affine Geometry, 
        rather in that of Projective Geometry. Later, we will answer Question \ref{motivation1} in the context of Affine Geometry and 
        see some reasons why that answer doesn't work when we move from one geometry to another, and from there, we will deduce 
        the need to provide an answer in the projective context.
         
         Next, we will show how the Question \ref{motivation1} arose. The following result is due to Rogers  \cite{ro1} (In fact, Rogers' result is 
        more general but for our purposes the next formulation is more relevant):
        
                         \textbf{[Rogers' Theorem]} 
       \textit{Let $K_1, K_2 \subset \mathbb{E}^d$, $d \geq 3$, be two convex bodies and let $p_i \in \inte K_i$, $i=1,2$.  
          Su\-ppo\-se that, for every  $v \in \Sd$, there exists a translation 
       $T_ u: \Ed \rightarrow \Ed$, for some direction $u$, such that 
       \begin{eqnarray}\label{rockero}
       T_u \big{(} [p_1+v^{\perp}] \cap K_1 \big{)}= [p_2+ v^{\perp}] \cap K_2.
        \end{eqnarray}
        Then there exists a translation $T: \Ed \rightarrow \Ed$ such that $T(K_1)=K_2$.}
        
        There are two possibilities: 1) $T(p_1)=p_2$ and 2) $T(p_1)\not=p_2$. Let us focus on case 2).
        In such case we have the following:       
        \begin{question}\label{motivation2}
       Let $K  \subset  \Ed$, $d\geq 3$, be a convex body and let $p_1, p_2\in \inte K$, $p_1\not=p_2$. 
       Su\-ppo\-se that, for every  $v \in \Sd$, there exists a translation 
       $T_u: \Ed \rightarrow \Ed$, for some direction $u$,  such that 
       \begin{eqnarray}\label{danzonero}
       T_u \big{(} [p_1+v^{\perp}] \cap K \big{)}= [p_2+ v^{\perp}] \cap K.
        \end{eqnarray}
        What can we say about $K$?
        \end{question}  
        Since the translation for a vector $u$ between two parallel sections of $K$ passing through $p_1,p_2$  can be interpreted as an 
        affine reflection, with respect to a hyperplane parallel to the section and with direction $u$, we can see that now we are on the conditions 
        of Question \ref{motivation1}, that is, The Question \ref{motivation1} and Question \ref{motivation2} are equivalent.  
         
         On the other hand, the next result, in the spirit of Rogers' theorem, has a corollary (mentioned as a particular case) related to Question \ref{motivation1}, 
         notice that in this result were considered only orthogonal reflections, it was proved in \cite{Efren1}:
 
          \textit{If for a pair of convex bodies $K_1, K_2 \subset \mathbb{E}^d$, $d \geq 3$, there exists a hyperplane 
         $H$ and two distinct points $p_1,p_2 \in \mathbb{E}^d \setminus H$ such that for every $(d-2)$-plane 
         $l \subset H$, there exists a reflection mapping the hypersection of $K_1$ defined by 
         $\mathrm{aff}\{p_1, l\}$ onto the hypersection of $K_2$ defined by $\mathrm{aff}\{p_2, l\}$, then the 
         reflection with respect to $H$ maps $K_1$ onto $K_2$. }

          \textit{In particular, if $K_1=K_2$, then $K_1$ has $H$ as hyperplane of 
         symmetry.}
 	 
	  \textbf{Answer to Question \ref{motivation1}.} 
        By Larman-Tamvakis' Theorem \cite{larmantamvakis}, $K$ is centrally symmetric with centre at the 
        mid-point of $[p_1,p_2]$. Thus, if we take a system of coordinates with the origin at the mid-point 
        of the line segment $[q_1,q_2]$, for all $v \in \Sd$, the relation
         \begin{eqnarray}\label{elizabeth}
        -([p_1+v^{\perp}]\cap K)=[p_2+v^{\perp}]\cap K, 
         \end{eqnarray}
         holds. By (\ref{danzonero}) and (\ref{elizabeth}) it follows that
         \begin{eqnarray}\label{papeleria}
        -([p_1+v^{\perp}]\cap K)=S_{v^\perp}^u \big{(} [p_1+v^{\perp}] \cap K \big{)}.
         \end{eqnarray}
         The relation (\ref{papeleria}) implies that $[p_1+u^{\perp}]\cap K$ 
         is centrally symmetric, i.e., all the hypersections of $K$ passing through $p_1$ are centrally 
         symmetric. Since $p_1$ is not the center of $K$, $p_1$ is a false center of $K$ and, 
         consequently, by virtue of the FCT, $K$ is an ellipsoid. $\square$
         
         It is possible to interpret an affine reflection $S_G^u : \mathbb{E}^d \rightarrow \mathbb{E}^d$  
          regarding the hyperplane $G$ and the direction $u$ as a harmonic homology 
          $\Phi_G^{\tau} :\mathbb{P}^d \rightarrow \mathbb{P}^d$ with hyperplane $G$ and 
         center $\tau$, where $\tau$ is the point in the hyperplane at infinite $H_{\infty}$ defined by $u$.
    
          Furthermore, it is possible to interpret that the parallel sections of $K$ passing through points $p_1$, 
          $p_2$ are defined by hyperplanes that intersect at $H_{\infty}$. Thus, we see that Question 
          \ref{motivation1}, which is 
          formulated in the affine context, can be extended naturally to Projective Geometry. This allows the 
          hyperplane $H$ to be any hyperplane, which is not a supporting hyperplane of $K$, and not only the 
          hyperplane $H_{\infty}$.
           
           However, as soon as we do this, we realize that we no longer have a projective version of the 
           Larman-Tamvakis theorem, which was the starting point of our answer to Question \ref{motivation1}. 
           That is why an original answer to the general question is necessary. The Theorem \ref{Claudia} does 
           precisely that: it establishes the statement that defines the general situation and provides a proof.
       \section{Proof of Theorem \ref{Claudia}}
          
         Let $h:=L(p_1,p_2)\cap H$ and let 
         $g\in  L(p_1,p_2)$ such that 
                \begin{eqnarray}\label{razoncruz}
                [p_1,p_2;g,h]=-1.
                \end{eqnarray}
               \begin{lemma}\label{pole}
              The point $g$ is a pole of $K$ with polar $H$.
              \end{lemma}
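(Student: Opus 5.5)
The plan is to show first that the hyperplane $G$ of every homology in the hypothesis passes through $g$, and then to use this, two-dimensional plane by plane, to conclude that the harmonic homology $\Phi_H^{g}$ leaves $K$ invariant.

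\emph{Step 1: $g\in G$ for every $l$.} Fix a $(d-2)$-plane $l\subset H$ and let $\Phi=\Phi_G^{\tau}$ be the corresponding harmonic homology. Since $K_l^i$ spans $\Pi_i$, $\Phi$ maps $\Pi_1$ onto $\Pi_2$. Every hyperplane through $\tau$ is invariant, and since $\tau\in H$, $\Phi(H)=H$. The hyperplane $G$ is fixed pointwise. So on the pencil of hyperplanes through $l$, $\Phi$ induces an involution that fixes $H$ and $G$ and interchanges $\Pi_1,\Pi_2$. Hence
\[
[\Pi_1,\Pi_2;G,H]=-1 .
\]
Cutting this pencil with the line $L(p_1,p_2)$, the dual version of Pappus' theorem gives $[p_1,p_2;G\cap L(p_1,p_2),h]=-1$. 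Comparing with (\ref{razoncruz}), we get $G\cap L(p_1,p_2)=g$. Thus $g\in G$ for every $l$. This step should be routine; the degenerate case $L(p_1,p_2)\subset G$ is excluded because $\Phi$ moves $p_1$ off $\Pi_1$.

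\emph{Step 2: reduction to planes through $L(p_1,p_2)$.} It suffices to show that for every $x\in \bd K$ the point $x':=\Phi_H^{g}(x)$ lies in $\bd K$. Take the 2-plane $\sigma=\aff\{p_1,p_2,x\}$, which contains $g$ and $h$; the case $x\in L(p_1,p_2)$ follows by continuity. Put $q'=L(p_1,x)\cap H$. I would then choose $l\subset H$ through $q'$ so that, in addition, the center $\tau$ lies in $\sigma$. With such an $l$, $\sigma$ is $\Phi$-invariant, since it contains $\tau$ and $\Phi(q')=q'$ and $\Phi(p_1)\in\sigma$.

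Inside $\sigma$ we then have the following data:
\begin{itemize}
\item a harmonic homology with axis $G\cap\sigma=L(g,q')$ and center $\tau\in L(h,q')=H\cap\sigma$;
\item this homology maps the chord $[a_1,b_1]=K\cap L(p_1,q')$ onto the chord $[a_2,b_2]=K\cap L(p_2,q')$.
\end{itemize}
Apply the complete quadrangle theorem to $a_1,b_1,a_2,b_2$. The diagonal points are $q'$, $\tau$ (on $L(a_1,a_2)$ and $L(b_1,b_2)$), and a point of $G$ (on $L(a_1,b_2)$ and $L(b_1,a_2)$). From this I expect to read off $[a_1,b_2;g,\,\cdot\,]=-1$ with the fourth point on $H$, i.e.\ $b_2=\Phi_H^{g}(a_1)$, after using Step 1 to identify the relevant diagonal point with $g$. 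Letting $x$ range over $\bd K\cap\sigma$ and $\sigma$ over all planes through $L(p_1,p_2)$ gives $\Phi_H^{g}(K)=K$. This says exactly that $g$ is a pole of $K$ with polar $H$.

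\emph{Main obstacle.} The delicate point is the choice of $l$ with $\tau\in\sigma$, because $\tau$ is not controlled by the hypothesis. The $(d-2)$-planes $l\subset H$ through $q'$ form a $(d-3)$-parameter family; for $d=3$ there is only $l=L(q',h)$, and then $\sigma\cap H\subset l\subset G$. My first attempt would avoid choosing $l$ altogether. Use the invariance of lines through $\tau$ to write $C_\tau(K_l^1)=C_\tau(K_l^2)$, and intersect both sections with the invariant hyperplane $\aff\{\tau,\sigma\}$, when it is a hyperplane. This reduces to the same planar quadrangle configuration after a central projection from $\tau$ onto $\sigma$. Since central projection preserves cross ratios, the harmonic relation obtained in the projected picture lifts back to $\sigma$.
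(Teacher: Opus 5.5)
\emph{Gap in Step 2.} Your Step 1 is correct for $h\notin l$ (when $h\in l$ one has $\Pi_1=\Pi_2$ and $G$ is unconstrained), and it is exactly the first half of the paper's proof. The genuine gap is the phrase ``after using Step 1 to identify the relevant diagonal point with $g$''.

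Grant that $\tau\in\sigma$. The quadrangle $a_1,b_1,a_2,b_2$ then tells you only two things: $D:=L(a_1,b_2)\cap L(b_1,a_2)$ lies on $G\cap\sigma=L(g,q')$, and the harmonic homology with centre $D$ and axis $\sigma\cap H$ interchanges the two chords. But the harmonic homologies of $\sigma$ with axis $\sigma\cap H$ that carry $L(p_1,q')$ onto $L(p_2,q')$ are \emph{exactly} those with centre on $L(g,q')\setminus\{q'\}$. Step 1 gives you nothing beyond this. So $D=g$ does not follow from the incidences; it is the entire content of the lemma.

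Concretely, take $H=H_\infty$. The two chords are then translates of each other, $a_1b_1b_2a_2$ is a parallelogram, and $D$ is its centre. That centre can lie anywhere on the midline between the two parallel lines, whereas $g$ is the midpoint of $[p_1,p_2]$. A single homology constrains only $K_l^1$ and $K_l^2$, so no argument using one quadrangle per boundary point can force $D=g$. The hypothesis has to be combined over many $l$. (When $H\cap K=\emptyset$, a projective map sending $H$ to infinity turns the lemma into the affine statement that $K$ is centrally symmetric about the midpoint of $[p_1,p_2]$. The paper itself obtains that statement only from the Larman--Tamvakis theorem.)

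There are further problems with the setup. You cannot choose $l$ so that $\tau\in\sigma$, because $\tau$ is handed to you by the hypothesis. Your count is also off: the $(d-2)$-planes of $H$ through $q'$ form a $(d-2)$-parameter family, and those containing $h$ are the degenerate ones. The projection fallback is not yet an argument either. For $d=3$, $\aff\{\tau,\sigma\}$ is the whole space unless $\tau\in\sigma$, and in any case it would only reproduce the same planar configuration with the same unresolved identification.

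\emph{The paper's proof does not supply the missing step.} After getting $g\in G$ as you do, it takes a chord $[r_1,r_2]$ through $g$ and a hyperplane $\Pi_1\supset L(p_1,r_1)$. It then applies the dual of Pappus' theorem in $\Delta=\aff(L(p_1,p_2)\cup L(r_1,r_2))$ to the harmonic pencil cut out by $\Pi_1,\Pi_2,G,H$, and writes $[r_1,r_2;g,k]=[p_1,p_2;g,h]$.

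What this actually yields is $[r_1,r_2';g,k]=-1$ for $r_2':=L(r_1,r_2)\cap\Pi_2$. Replacing $r_2'$ by $r_2$ requires $r_2\in\Pi_2$, which is not shown. Trying to force it through the choice of $l$ gives $\Pi_1=\Pi_2$ unless $r_2=r_2'$ already. In fact the only property of $\Phi_G^{\tau}$ used there is $\Phi_G^{\tau}(\Pi_1)=\Pi_2$. That holds for $G=\aff(l\cup\{g\})$ and any $\tau\in H\setminus l$, whatever $K$ is. So that argument cannot be complete as written; it skips the same identification as your Step 2.
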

              \begin{proof}
              
              Let $r_1,r_2\in \bd K$ such that $g\in L(r_1,r_2)$ and let $k:=L(r_1,r_2)\cap H$. 
              We are going to prove that $[r_1,r_2;g,k]=-1$.
              
              Let $\Pi_1$ be a hyperplane containing $L(p_1,r_1)$ but not containing 
              $L(r_1,r_2)$, let $l:=\Pi_1\cap H$ and let $\Pi_2$ be the hyperplane defined by 
              $l$ and $p_2$. By virtue of the hypothesis there exists an harmonic 
              homology 
               \[
              \Phi_G^{\tau} :\mathbb{P}^d \rightarrow \mathbb{P}^d \textrm{ } \textrm{ with }\textrm{ } \textrm{ plane } \textrm{ }G 
              \textrm{ } \textrm{ and } \textrm{center} \textrm{ }
              \tau 
              \]
              such that 
              \[
              l\subset G, \tau \in H\textrm{ } \textrm{ and }\textrm{ } \Phi_G^{\tau} (K_l^1)=K_l^2,
              \]
              where $K_l^i:=K\cap \Pi_i$, $i=1,2$. Notice that, in particular, this yields
              \[
              \Phi_G^{\tau}(\Pi_1)=\Pi_2.
              \]
              Let $q_i:=L(g, \tau)\cap \Pi_i$, $i=1,2$ and let $\Gamma$ be the 
              2-plane defined by the lines $L(p_1,p_2)$, $L(q_1,q_2)$. Then the lines 
              \[
              \Gamma \cap \Pi_1, \Gamma \cap \Pi_2, \Gamma \cap G \textrm{ } \textrm{ and }\textrm{ }\Gamma \cap H
              \] 
              defines a harmonic lines of a pencil and the lines $L(p_1,p_2)$ and $L(q_1,q_2)$ are two transversal of it (notice that since 
              $\Pi_1$ is not containing $L(r_1,r_2)$) the point $g$ does not belong to $\Pi_1$ and, consequently, 
              $L(p_1,p_2), L(q_1,q_2)$ are not contained in $ \Pi_1$ and $\Gamma$ is not contained in $\Pi_1$). Thus,
              by the dual version of Pappus' theorem:
              \[
               [p_1,p_2;g',h]= -1=[q_1,q_2;g'',\tau],
               \] 
               where $g':=L(p_1,p_2) \cap G$, $g'':=L(q_1,q_2) \cap G$. Hence 
               $g'$ is harmonic conjugate of $h$ with respect to $p_1,p_2$. 
               On the other hand, by (\ref{razoncruz}) $g$ is harmonic conjugate of $h$ with respect to 
               $p_1,p_2$. Thus, by virtue of 
               the uniqueness of the conjugate harmonic of $h$ with respect to $p_1,p_2$, it follows 
               that $g'=g=g''$ and $g\in G$.
              
              On the other hand, let $\Delta$ be the plane defined by the lines $L(p_1,p_2)$, $L(r_1,r_2)$. 
              Then the lines $\Delta \cap \Pi_1$, $ \Delta \cap \Pi_2$, $ \Delta \cap G$ and $ \Delta \cap H$ 
              defines a harmonic lines of a pencil and the lines $L(p_1,p_2)$ and $L(r_1,r_2)$ are two 
              transversal of it. Thus, by the dual version of Pappus' theorem, $ [p_1,p_2;g,h]= [r_1,r_2;g,k]$ 
              and, by (\ref{razoncruz}), $[r_1,r_2;g,k ]=-1$.
              By virtue of the arbitrariness of $r_1,r_2\in \bd K$ such that $g\in L(r_1,r_2)$, we conclude that 
              $g$ is pole of $K$ with polar $H$.
              \end{proof}
              
               \begin{lemma}\label{falsepole}
               The point $p_i$ is a false pole of $K$ with respect to the hyperplane $H$, $i=1,2$.
               \end{lemma}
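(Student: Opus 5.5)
We have to check the two conditions in the definition of a false pole. The tools are Lemma \ref{pole} (the point $g$ is a pole of $K$ with polar $H$, so the harmonic homology $\Phi_H^{g}$ maps $K$ onto itself) and the fact, obtained in its proof, that the hyperplane $G$ of every homology in the hypothesis passes through $g$. By symmetry of the hypotheses it is enough to treat $p_1$.

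\emph{Step 1: $p_1$ is not a pole of $K$ with respect to $H$.} Let $\{A,B\}=L(p_1,p_2)\cap \bd K$. For a fixed $h=L(p_1,p_2)\cap H$, the point of $L(p_1,p_2)$ harmonically conjugate to $h$ with respect to $A,B$ is unique. By Lemma \ref{pole} this point is $g$. Since $p_1\neq p_2$ and $h\neq p_i$, relation (\ref{razoncruz}) forces $g\neq p_1$. Hence the line $L(p_1,p_2)$ already shows that $p_1$ fails the pole condition with polar $H$.

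\emph{Step 2: hypersections through $p_1$ have a pole.} Take a hyperplane $\Pi_1\ni p_1$, put $l=\Pi_1\cap H$ and $\Pi_2=\aff\{p_2,l\}$, and let $\Phi_G^{\tau}$ be the homology given by the hypothesis. Consider
\[
\Psi:=\Phi_H^{g}\circ\Phi_G^{\tau}.
\]
The map $\Phi_H^{g}$ fixes $l$ pointwise and sends $p_2$ to the harmonic conjugate of $p_2$ with respect to $g,h$ on $L(p_1,p_2)$. By (\ref{razoncruz}) this conjugate is $p_1$, so $\Phi_H^{g}(\Pi_2)=\Pi_1$. Since $\Phi_H^{g}(K)=K$, it follows that $\Psi(K_l^1)=K_l^1$. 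Restricted to $\Pi_1$, the map $\Psi$ is a collineation fixing the $(d-2)$-plane $l$ pointwise, hence a perspective collineation of $\Pi_1$ with axis $l$ (or the identity). A homology with cross-ratio different from $-1$, or an elation, cannot map a convex body meeting its axis onto itself: iterating it pushes points of the body monotonically along lines through the center, which contradicts invariance of the compact set. Therefore $\Psi|_{\Pi_1}$ is either the identity or a harmonic homology with axis $l$ and some center $c\in\Pi_1$. In the second case $c$ is a pole of $K_l^1$ with polar $l=\Pi_1\cap H$.

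\emph{Step 3: passing to 2-planes.} This is the step I expect to be the main obstacle, because the center $c$ depends on $\Pi_1$ and need not lie on a prescribed 2-plane $\Pi\ni p_1$. A section of a body with pole $c$ and polar $l$ by a plane through $c$ again has a pole, with polar the trace of $l$. So the goal is to choose the hyperplane $\Pi_1\supset\Pi$ so that its center $c$ lies in $\Pi$. My first attempt would be to locate $c$ explicitly. Compute $\Psi$ on the line $L(p_1,g)\cap\Pi_1$ using that $g\in G$ and that $\tau,h\in H$. Then show, via the complete quadrangle theorem and the dual Pappus theorem as in the proof of Lemma \ref{pole}, that $c$ is determined by $l$ together with the fixed data $p_1,p_2,g$. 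This should place $c$ on a line through $p_1$ that can be steered into $\Pi$ by rotating $\Pi_1$ about $\Pi$. If that fails, I would use a continuity/dimension argument: as $\Pi_1$ ranges over the hyperplanes containing $\Pi$, the map $\Pi_1\mapsto c(\Pi_1)$ is continuous and must meet $\Pi$.

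The degenerate case $\Psi|_{\Pi_1}=\mathrm{id}$ has to be handled as well. There $\Phi_G^{\tau}$ and $\Phi_H^{g}$ agree on $\Pi_1$, and I would show that this happens only for a nongeneric set of $\Pi_1$, which can then be covered by continuity. With Steps 1--3, $p_1$ satisfies both conditions in the definition of a false pole of $K$ with respect to $H$, and likewise $p_2$.
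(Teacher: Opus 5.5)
\textbf{Step 2 has a genuine gap.} Your Step 1 is correct; the paper only gestures at it through uniqueness of the pole $g$ with polar $H$. The composite $\Psi=\Phi_H^{g}\circ\Phi_G^{\tau}$ is a good idea, but your reason for $\Psi|_{\Pi_1}$ being harmonic is false. Take a homology with center $c$, axis $l$ and positive ratio $\neq 1$. On every line $L\ni c$ it preserves each of the two arcs between $c$ and $L\cap l$. Hence it maps every cone $\mathrm{conv}(\{c\}\cup B)$ with $B\subset l$ onto itself. So your iteration argument excludes a non-involutory homology only when $K_l^1\cap l=\emptyset$, whereas the theorem allows $H$ to cut $K$.

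\textbf{The missing idea makes dynamics unnecessary.} Since $g\in G$ (proof of Lemma~\ref{pole}) and $\tau\in H$, each center lies in the other's hyperplane, so the two harmonic homologies commute. Concretely, for the linear lifts $R_1=I-2\hat{\tau}\varphi_G^{T}/\varphi_G(\hat{\tau})$ and $R_2=I-2\hat{g}\varphi_H^{T}/\varphi_H(\hat{g})$, the cross terms contain $\varphi_G(\hat{g})=0$, resp. $\varphi_H(\hat{\tau})=0$. Thus $\Psi$ is an involution with fixed set $l\cup L(g,\tau)$, where $l=G\cap H$. It follows that $\Psi|_{\Pi_1}$ is the harmonic homology with axis $l$ and center $q_1:=L(g,\tau)\cap\Pi_1$. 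Note that $L(g,\tau)\not\subset\Pi_1$, because $\tau\notin G\supset l$. In particular the identity case never occurs, so no genericity or continuity argument is needed there. For $\Pi_1\supset L(p_1,p_2)$, where the argument for $g\in G$ breaks down, $\Phi_H^{g}$ itself restricts to $\Pi_1=\Pi_2$ and $g$ is the pole.

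\textbf{Relation to the paper.} This $q_1$ is exactly the pole the paper finds. The paper reaches it synthetically: on each $2$-plane $\Gamma\supset L(g,\tau)$, $\Phi_G^{\tau}$ pairs the chord endpoints $s_{11}\mapsto s_{12}$, $s_{21}\mapsto s_{22}$, while $\Phi_H^{g}$ pairs them crosswise. So $g$ and $\tau$ are diagonal points of the quadrangle, and the complete quadrangle theorem gives $[s_{11},s_{21};q_1,z]=-1$. The crosswise pairing is precisely the statement $\Psi(s_{11})=s_{21}$. Your composite, once repaired this way, is an equivalent and cleaner formulation of the paper's argument.

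\textbf{Step 3 is not carried out.} The paper has no counterpart to it either. Its proof establishes the pole property for hyperplane sections $K\cap\Pi_1$ through $p_1$, and treats that as the false-pole property fed into the False Pole Theorem; for $d=3$ this is literally the definition. Your sketch would not work as stated. The center is $q_1=L(g,\tau)\cap\Pi_1$, which depends on $\tau$; it is not a point on a line through $p_1$ determined by $l,p_1,p_2,g$. The continuity/dimension claim is only asserted.

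\textbf{A workable route for $2$-sections when $d\geq 4$.} Use an intermediate-value argument. Fix a $(d-2)$-plane $\Lambda\ni p_1$ and consider the pencil of hyperplanes $\Pi'\supset\Lambda$. The pole of $K\cap\Pi'$ with polar $\Pi'\cap H$ is unique, hence depends continuously on $\Pi'$. Its signed position relative to $\Lambda$ inside $\Pi'$ changes sign after a half-turn, so for some $\Pi'$ the pole lies in $\Lambda$. Then repeat one dimension at a time. This needs the poles to stay in the affine chart. That is automatic when $H\cap K=\emptyset$, since the poles are then interior points, but it requires extra care when $H$ cuts $K$.
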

               
              \begin{proof}
              In order to prove that the point $p_1$ is a false pole of $K$ with respect to 
              the hyperplane $H$, we must show that, for every hyperplane $\Pi_1$ passing through 
              $p_1$ and not containing $L(p_1,p_2)$, there exists a pole $q_1$ of the 
               section $K_l^1:=K\cap \Pi_1$ with polar $l$, where $l:=\Pi_1\cap H$. 
               Notice that, by Lemma \ref{pole}, $g$ is a pole of 
               $K$ with polar $H$ and, since $\Pi_1$ is not containing $L(p_1,p_2)$, it 
               follows that $q_1\not=g$, that is, $p_1$ is a false pole of $K$ with respect to 
              the hyperplane $H$. 
              
              Let $\Pi_1$ be a hyperplane passing through $p_1$ and not containing 
              $L(p_1,p_2)$ and let $l:=\Pi_1\cap H$ and   
              $\Pi_2:= \aff\{p_2, l\}$. By the hypothesis, there exists 
              an harmonic homology 
              \[
              \Phi_G^{\tau} :\mathbb{P}^d \rightarrow \mathbb{P}^d \textrm{ } \textrm{ with }\textrm{ } \textrm{ plane } \textrm{ }G 
              \textrm{ } \textrm{ and } \textrm{center} \textrm{ }
              \tau 
              \]
              such that 
              \[
              l\subset G, \tau \in H\textrm{ } \textrm{ and }\textrm{ } \Phi_G^{\tau} (K_l^1)=K_l^2,
              \]
              where $K_l^i:=K\cap \Pi_i$, $i=1,2$. Let 
              $q_i:=L(g, \tau)\cap \Pi_i$, $i=1,2$. Notice that
              \begin{eqnarray}\label{penameda}
               [q_1,q_2;g,\tau]=-1.
              \end{eqnarray}
              Let $\Gamma$ be a 2-plane containing 
              the line $L(q_1,q_2)$. Let 
              \[
              \{T_1,T_2\}:=\Gamma \cap C_{\tau}(K_l^1), 
               T_3:=L(q_1,q_2), T_4:=\Gamma \cap H
               \] (we recall that $C_{\tau}(K_l^1)$ is the cone define by the section $K_l^1$ and apex $\tau$) and let 
               \[
               s_{ij}:=T_i\cap \Pi_j, 
              i,j=1,2. \textrm{ } \textrm{  and }\textrm{ }g':=L(s_{11},s_{22})\cap L(s_{12},s_{21})\textrm{ } (\textrm{ see } \textrm{ fig. } \ref{ninera}).
              \]     
               By virtue that $\Phi_G^{\tau}(\Gamma \cap \Pi_1)=\Gamma \cap \Pi_2$ it follows that 
              \[
              \Phi_G^{\tau}(s_{11})=s_{12} \textrm{ }\textrm{ and } \textrm{ }\Phi_G^{\tau}(s_{21})=s_{22}.
              \]               
      By Lemma \ref{pole}, $g$ is a pole of $K$ with polar $H$. Thus if we define the harmonic homology 
               \[
               \Phi_H^{p}: \Ed \rightarrow \Ed \textrm{ } \textrm{ } \textrm{ with }\textrm{ } \textrm{ hyperplane }\textrm{ } H\textrm{ } \textrm{ and }\textrm{ }\textrm{  center } p,
               \]
               \begin{figure}[H]
    \centering
     \includegraphics [width=.8\textwidth]{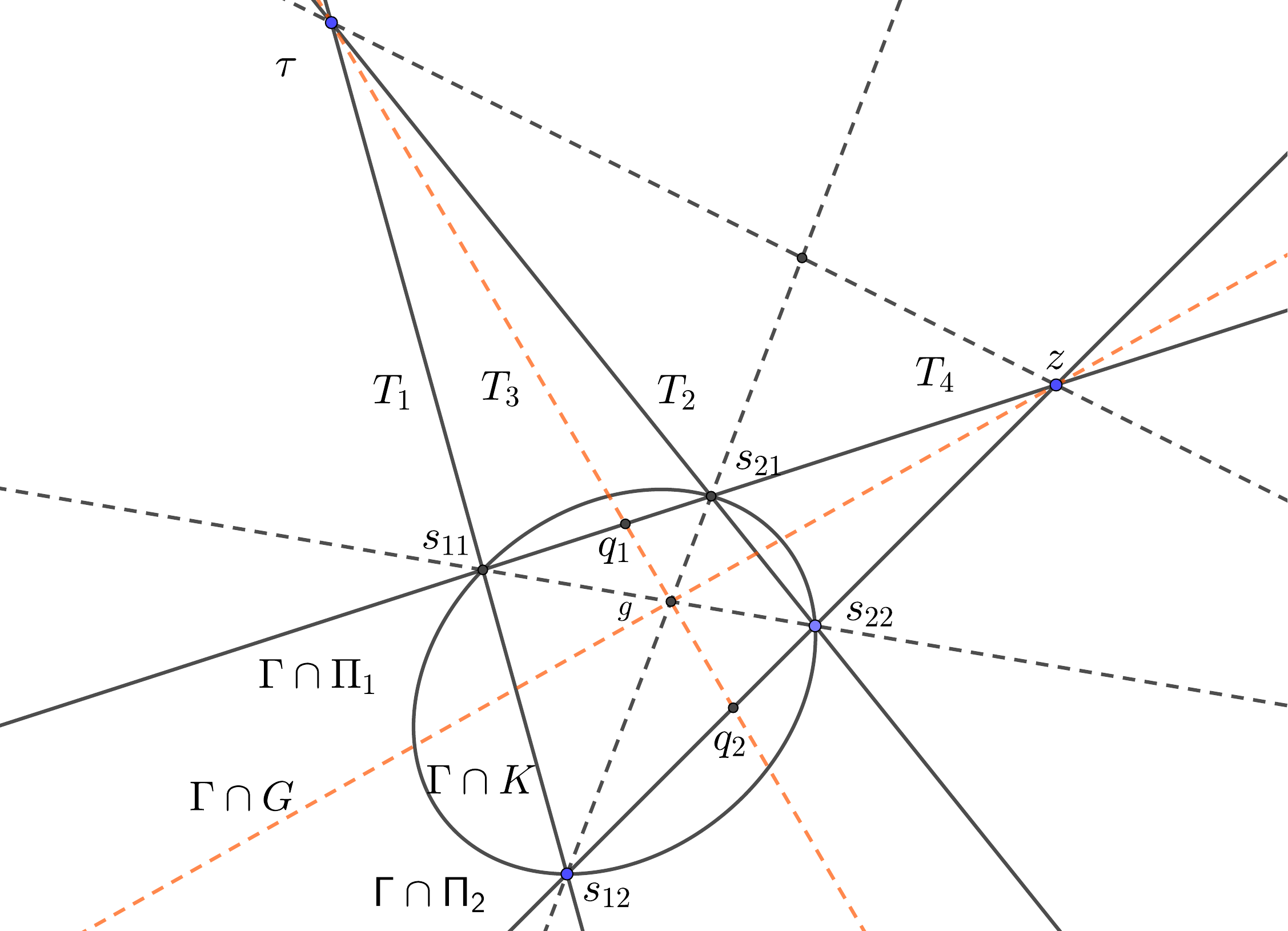}
    \caption{$p_1$ is a false pole of $K$ with respect to the hyperplane $H$, case $H\cap K=\emptyset$. In the case $H\cap K\not=\emptyset$, the drawing is the same; we only need to swap the line $\Gamma \cap H$ to $\Gamma \cap G$ and the point $\tau$ to $g$. In that case, we would have $g\notin \Gamma \cap K$ and $\tau \in \Gamma \cap K$.}
    \label{ninera}
     \end{figure}
                then $\Phi_H^{p}(K)=K$. By (\ref{razoncruz}) and (\ref{penameda}) and since $l\subset \Pi_1,\Pi_2$
                it follows that $\Phi_H^{p}(\Pi_1)= \Pi_2$, in particular,
               \[
                \Phi_H^{p}(\Gamma \cap \Pi_1)=\Gamma \cap \Pi_2.
               \]
                Consequently,      
                \[
                \Phi_H^{p}(s_{11})= s_{22}\textrm{ } \textrm{  and  }\textrm{ } 
               \Phi_H^{p}(s_{21})= s_{12}.
               \]               
              This means that $g'=g$ and the points
              \[
               \{s_{11}, s_{22}, s_{21}, s_{12}\}
               \]
               define a complete quadrangle whose pair of opposite sides $L(s_{11},s_{22})$, 
              $L(s_{21},s_{12})$ intersects at the diagonal point $g$. By the Complete Quadrangle Theorem  
              \[
              [s_{11},s_{21};q_1,z]=-1,
              \]
              where $z:=L(s_{11},s_{21})\cap H$. By virtue of the 
              arbitrariness of the 2-plane $\Gamma$ it follows that $q_1$ is a pole of $K_l^1$ with polar $l$. 
              
              The proof that the point $p_2$ is a false pole of $K$ with respect to 
              the hyperplane $H$ is analogous.
              \end{proof}
                         
            \textbf{Author Contributions.} Material preparation were performed by Efr\'en Morales 
           Amaya. The first draft of the manuscript was written by Efr\'en Morales Amaya.  
           
          \textbf{Funding.} The authors declare that no funds, grants, or other support were received 
          during the preparation of this manuscript.

          \textbf{Data Availability.} Data sharing not applicable to this article as no datasets were 
          generated or analyzed during  the current study.
          
          \textbf{Declarations.}
          
           \textbf{Conflict of interest.} The authors have no relevant financial or non-financial interests to disclose.


\begin{thebibliography}{99} 

\bibitem{ai-pe-ro}  Aitchison, P.W., Petty, C.M. and Rogers, C.A. A convex body with a false centre is an ellipsoid,  \textit{Mathematika} \textbf{18} (1971), $50-59$.
 

\bibitem{BusemannKelly} Busemann, H., Kelly, P. Projective Geometry and Projective Metrics. Academic Press Inc. New York 1953.


\bibitem{la} Larman, D.G. A note on the false center problem, \textit{Mathematika} \textbf{21} (1974), $216-27$.

\bibitem{larmantamvakis} D. Larman, Tamvakis.  A characterisation of centrally symmetric 
convex bodies in $\mathbb{E}^n$, \textit{Geom. Dedicata} \textbf{10} (1981) $161-176$.


\bibitem{LarmanMora} Larman, D., Morales-Amaya, E.  On the false pole 
problem. \emph{Mh Math} \textbf{151}, 271-286 (2007). https://doi.org/10.1007/s00605-007-0448-6

    \bibitem{DavidLuisEfren} Larman, D., Montejano, L. and Morales-Amaya, E. (2010), Characterization of the ellipsoid by means of parallel translated sections. \textit{Mathematika}, \textbf{56}: $363-378$. https://doi.org/10.1112/S0025579310000379

\bibitem{Constantwidth}  Martini, H., Montejano, L., Oliveros, D. \emph{Bodies of Constant Width: An Introduction to Convex Geometry with Applications}. Springer, 2019.


\bibitem{mm} Montejano, L., Morales, E.: Polarity in convex bodies: characterizations of ellipsoids. \textit{Mathematika} \textbf{50},
 (2003) $63-72$. 

\bibitem{mm2} Montejano, L and Morales-Amaya, E.: Variations of Classic Characterizations of Ellipsoids and a Short Proof of the False Centre Theorem. \textit{Mathematika} \textbf{54} (2007), $35-40$.

\bibitem{mm3} Montejano, L. and Morales-Amaya, E.: A shaken False Centre Theorem.  \textit{Ma\-the\-ma\-ti\-ka} \textbf{54} (2007), $41-46$.

\bibitem{Efren1} Morales-Amaya, E. Convex bodies with pairs of sections associated by reflections. \textit{Beitr Algebra Geom} (2025). https://doi.org/10.1007/s13366-025-00806-w
 
\bibitem{ro1} Rogers, C.A. : Sections and projections of convex bodies, \textit{Portugaliae Math.} \textbf{24} (1965), $99-103$.

\bibitem{ro2} Rogers, C.A. : An equichordal problem, \textit{Geom. Dedicata} \textbf{10} (1981),$73-78$.


\bibitem{Struik} Struik, D. J., Lectures on Analytic and Projective Geometry. Addison-Wesley Publishing Company, 1953.

\end{thebibliography}
\end{document}